\newtheorem{thm}{Theorem}[section]
\newtheorem{lem}[thm]{Lemma}
\theoremstyle{definition}
\newtheorem{defn}{Definition}[section]
\theoremstyle{remark}
\begin{document}

\parskip=10pt

\flushbottom 

\title{Packing-constrained point coverings} 

\author{
Veit Elser\\
Laboratory of Atomic and Solid State Physics\\
Cornell University, Ithaca, NY 14853-2501} 

\date{}

\maketitle

\begin{abstract}
In the packing-constrained point covering problem, PC$^2$, one seeks configurations of points in the plane that cannot all be covered by a packing arrangement of unit disks. We consider in particular the problem of finding the minimum number of points $N$ for which such a configuration exists and obtain the bounds $11\le N\le 55$. The disparity of these bounds is symptomatic, we believe, of the fact that PC$^2$ belongs in a higher complexity class than the standard packing and covering problems.
\end{abstract}

\section{Introduction}

When it seemed that all questions concerning the packing and covering properties of disks in the plane had been asked (and in many cases answered), Naoki Inaba \cite{Inaba} proposed a new one that, although easy to state, appears to be quite hard. Consider a two-player game where the first player places $N$ points in the plane. After all the points are placed, the second player tries to cover all the points with unit radius disks. Although the number of disks is unlimited, they may not overlap. The first player is declared the winner if the second player cannot cover all $N$ points. Problem: what is the minimum $N$ for which the first player has a winning strategy?

This problem combines elements of covering and packing optimization. The challenge is to design point sets of size $N$ that are difficult to cover with unit disks, when the disks are constrained to form a packing. A good indicator of the difficulty of this problem is the large gap between the lower and upper bounds, on the minimum size design that cannot be covered, that can be obtained with a reasonable effort. In this paper we establish the bounds $11\le N\le 55$. While both bounds are certainly poor, we submit them nevertheless as evidence of the difficulty of this problem.

\section{Lower bound}

The first player faces a challenge even when the second player is given a significant handicap.  Consider the handicap where the disks are required to form a close packing of the plane. The disk centers will always lie on the points of the hexagonal lattice $H$ with minimum distance 2, or more generally, a translation of $H$ by a vector $t$. The second player, in this restricted form of play, is limited to selecting the translation $t$ such that disks centered at $H+t$ cover as many of the first player's points as possible. Owing to the translation symmetry of $H$, both $t$ and the points played by the first player should be treated as elements of the fundamental domain $U=\mathbb{R}^2/H$. Clearly the minimum number of points $N'$ that cannot be covered by the second player with this handicap, given optimal play by the first player, is a lower bound on the number $N$ we seek for games without the handicap.

Our lower bound is based on properties of the \textit{interstitium}, the space  $I$ between the disks in a close packing. 
\begin{defn}
Let $D$ be the unit disk centered on the origin, then
\[
I(t)=\left(\mathbb{R}^2\setminus (H+t+D)\right)/H
\]
is the interstitium of a translated close packing. 
\end{defn}

\begin{thm}\label{10thm}
Any configuration of 10 points in the plane can be covered by a packing of unit disks.
\end{thm}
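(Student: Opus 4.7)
The plan is to invoke the handicap reduction from the preceding paragraph: since a close packing is a packing, it suffices to show that for any $10$ points $p_1,\ldots,p_{10}\in\mathbb{R}^2$ there exists a translation $t$ such that $H+t+D$ covers every $p_i$. I would establish this by a simple union-bound argument in the fundamental domain $U=\mathbb{R}^2/H$.

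First I would unpack the definition of the interstitium at the level of a single point. For $p\in U$, the translations $t\in U$ that fail to cover $p$ are exactly those with $p-t\in I(0)$, i.e., they form the translate $p-I(0)\subset U$, which has the same area as $I(0)$ itself, namely $|U|-\pi = 2\sqrt{3}-\pi$. Here $|U|=2\sqrt{3}$ because the triangular lattice $H$ of minimum distance $2$ has fundamental domain of that area, and it contains exactly one full unit disk.

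Summing over the $10$ points, the set of translations that miss at least one $p_i$ has measure at most $10(2\sqrt{3}-\pi)=20\sqrt{3}-10\pi$, so the theorem reduces to the inequality
\[
20\sqrt{3}-10\pi \;<\; 2\sqrt{3}, \qquad\text{equivalently}\qquad 1.8\,\sqrt{3}<\pi,
\]
which is a routine numerical check: $1.8\sqrt{3}\approx 3.1177$ while $\pi\approx 3.1416$. Any $t\in U$ outside the union of bad sets then witnesses the conclusion.

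The delicate point --- rather than a real obstacle --- is how tight this inequality is. Repeating the argument for $11$ points gives a bad-set measure bound $11(2\sqrt{3}-\pi)\approx 3.547$, which already exceeds $|U|\approx 3.464$, so the naive union bound fails at $N=11$. This both explains why the simple counting yields exactly the $N\ge 11$ quoted in the abstract, and signals that any improvement past $11$ would require careful control of the overlaps $(p_i-I(0))\cap(p_j-I(0))$ of the bad sets --- a substantially more intricate geometric problem that I would not expect a measure-theoretic argument alone to handle.
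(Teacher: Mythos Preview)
Your argument is correct and is essentially the same as the paper's: both reduce to the area inequality $10\,|I(0)| < |U|$, i.e.\ $10(2\sqrt{3}-\pi) < 2\sqrt{3}$, via a union bound over the ``bad'' translations in $U=\mathbb{R}^2/H$. The only cosmetic difference is that the paper writes the bad set for $p$ as $I(p)$ (using the symmetry $I(p)=p-I(0)$ in $U$) and phrases the conclusion contrapositively as $|P|\ge 2\sqrt{3}/(2\sqrt{3}-\pi)\approx 10.74$ for any uncoverable set $P$.
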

\begin{proof}
Let $P\subset U$ be a winning set of points for the first player when the second player has the handicap of being restricted to translates of close packings (such a set exists by theorem \ref{55thm}). Because $P$ is a winning set there is no translation $t$ such that $(H+t+D)/H$ covers every point of $P$, this last property being equivalent to $t\in I(p)$ for some $p\in P$. Thus every candidate translation $t$ is in the interstitium of some point of the wining set. But this is possible only if the interstitia $I(p)$ for $p\in P$ cover $U$. A necessary condition for this is
\[
|U|=\left|\cup_{p\in P}I(p)\right|\le \sum_{p\in P}|I(p)|
\]
from which we obtain
\[
|P|\ge \frac{|U|}{\max_{p\in P} |I(p)|}=\frac{2\sqrt{3}}{2\sqrt{3}-\pi}\approx 10.74.
\]
\end{proof}

\section{55-point configuration that cannot be covered}

In this section we show that at least one point in the configuration of 55 points shown in Figure 1 will not be covered in any packing arrangement of unit disks. Our construction exploits properties of the disk of radius $r=2/\sqrt{3}-1$ that fits snugly into the hole formed by three mutually tangent unit disks (Figure 2). We will refer to disks of this size as \textit{holes}.

\begin{figure}[!t]
\begin{center}
\includegraphics[width=4.in]{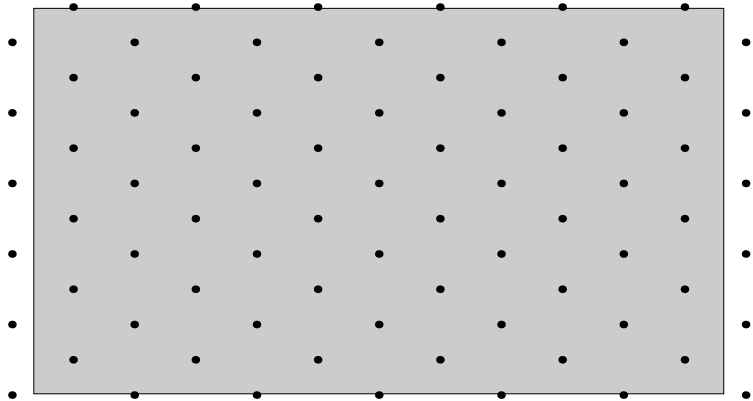}
\end{center}
\caption{The intersection of a rectangle (gray) of dimensions $2+4r$ and $1+3r$ and a hexagonal lattice of minimum distance $\sqrt{3}r$ gives a 55-point configuration that cannot be covered by a packing of unit disks ($r=2/\sqrt{3}-1$). Points near the horizontal edges of the rectangle are in its exterior.}
\end{figure}

\begin{defn}
A hole in a packing of unit disks is a disk of radius $r=2/\sqrt{3}-1$ disjoint from any of the unit disks.
\end{defn}

\begin{lem}\label{adjacentholelemma}
Let $D$ be a disk in a packing of unit disks and $A$ any arc of the circumference of $D$ having central angle $2\pi/6$, then a hole is tangent to $D$ somewhere along $A$.
\end{lem}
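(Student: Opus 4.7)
The plan is to parametrize the candidate tangent points of a hole by the angle $\theta \in [0, 2\pi)$ on $\partial D$ and to bound the arcs of $\partial D$ excluded by other packing disks. Place the center $c$ of $D$ at the origin. A hole tangent to $D$ at angle $\theta$ has its center at distance $1+r = 2/\sqrt{3}$ from $c$, so it overlaps a packing disk $D'$ whose center lies at distance $d$ and polar angle $\alpha$ from $c$ if and only if $|\theta-\alpha|<\beta(d):=\arccos(d/(2(1+r)))$. Only neighbors with $d\in[2,4/\sqrt{3}]$ contribute a nonempty forbidden arc, and on this range $\beta(d)\in[0,\pi/6]$.

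The crux is the following claim: \emph{for any two packing disks with centers at polar angles $\alpha_1<\alpha_2$ and distances $d_1,d_2\in[2,4/\sqrt{3}]$ from $c$, the packing constraint $|c_1-c_2|\ge 2$ forces $\alpha_2-\alpha_1\ge\beta(d_1)+\beta(d_2)$.} The law of cosines applied to the triangle $c,c_1,c_2$ rewrites the packing constraint as $\cos(\alpha_2-\alpha_1)\le(d_1^2+d_2^2-4)/(2d_1d_2)$, while the addition formula gives $\cos(\beta(d_1)+\beta(d_2))=\bigl(3d_1d_2-\sqrt{(16-3d_1^2)(16-3d_2^2)}\bigr)/16$. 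Clearing the square root reduces the comparison to the polynomial inequality $u^2+v^2+uv-8u-8v+16\ge 0$ on the square $[4,16/3]^2$, with $u=d_1^2$ and $v=d_2^2$; I would verify this by observing that the restriction to each edge is a manifestly nonnegative quadratic (for instance $u(u-4)$ on $v=4$ and $(v-4/3)^2$ on $u=16/3$) and that the only critical point $u=v=8/3$ lies outside the square.

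Given the claim, suppose for contradiction that no valid tangent position lies in the closed arc $A$ of central angle $\pi/3$. Then the open forbidden intervals contributed by the packing neighbors cover $A$; compactness followed by removal of redundant intervals gives a minimal finite subcover whose members, once ordered by $\alpha$, must each \emph{strictly} overlap the next -- otherwise a point of $A$ between consecutive intervals is left uncovered. This forces $\alpha_{i+1}-\alpha_i<\beta(d_i)+\beta(d_{i+1})$ for each consecutive pair, contradicting the claim. The principal difficulty is the algebraic estimate, which is sharp at the tight hexagonal configuration $d_1=d_2=2$, $\alpha_2-\alpha_1=\pi/3$: there the two forbidden open intervals merely touch and leave a single uncovered point, which is exactly the tangent point of the standard interstitial hole -- precisely the hole the lemma must produce.
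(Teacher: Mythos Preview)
Your argument is correct and shares the paper's overall architecture: for each neighbouring packing disk $D'$ one identifies an open ``excluded'' arc on $\partial D$ (the set of tangent positions at which the hole would overlap $D'$), shows each such arc has central angle at most $\pi/3$, and proves that distinct excluded arcs are disjoint, so that no closed arc of length $\pi/3$ can be covered by them. (One small omission: you should note that a \emph{single} excluded interval, being open of length at most $\pi/3$, cannot by itself contain the closed arc $A$; this is why the minimal subcover has at least two members and your consecutive-pair contradiction applies.)

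Where you differ from the paper is in the proof of disjointness. You establish $\alpha_2-\alpha_1\ge\beta(d_1)+\beta(d_2)$ by a direct law-of-cosines computation, reducing via squaring to the polynomial inequality $u^2+v^2+uv-8u-8v+16\ge 0$ on $[4,16/3]^2$ and checking it on the boundary. The paper instead argues in one line from the \emph{defining} property of $r$: if two excluded arcs met, there would be a hole tangent to $D$ that simultaneously meets $D'$ and $D''$, i.e.\ a disk of radius $r$ that fails to fit in the pocket between three pairwise non-overlapping unit disks --- impossible, since $r$ is the radius that fits \emph{exactly} when the three disks are mutually tangent, and any looser configuration only enlarges the pocket. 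The paper's route is shorter and makes transparent why your inequality is sharp precisely at $d_1=d_2=2$, $\alpha_2-\alpha_1=\pi/3$; your route is fully explicit and self-contained, needing no appeal to the extremal geometry of the interstitial hole.
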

\begin{proof}
Consider configurations of holes tangent to $D$; their positions are constrained by the packing configuration of the other unit disks. Let $C$ be the subset of the circumference of $D$ where holes can be tangent. A disk $D'$ with distance $d$ from $D$ will exclude an open arc $E'$ from $C$ when $d<2r$. The central angle of $E'$ has the maximum value $2\pi/6$ when $d=0$. Two excluded arcs $E'$ and $E''$ cannot intersect because their intersection would be open and correspond to an open set of configurations where a hole is simultaneously tangent to three unit disks in a packing --- which is ruled out by choice of the hole radius $r$. The arcs excluded from $C$ are therefore disjoint and have maximum central angle $2\pi/6$. Intersecting $C$ with any arc of $D$ with central angle $2\pi/6$ will therefore include a point at which a hole is tangent.
\end{proof}

\begin{figure}[!t]
\begin{center}
\includegraphics[width=2.in]{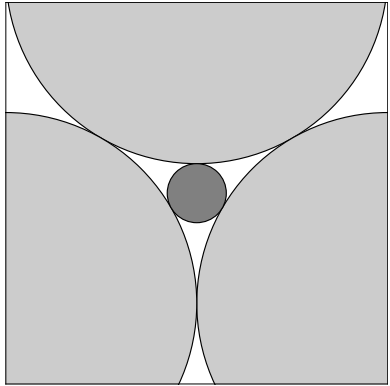}
\end{center}
\caption{A hole (dark gray) is the maximum size disk that fits in the interstitial space formed by three mutually tangent unit disks (light gray).}
\end{figure}

\begin{lem}\label{holeinrectanglelemma}
Let $R$ be a rectangle with sides $2+4r$ and $1+3r$, then in any packing of unit disks $R$ contains a hole.
\end{lem}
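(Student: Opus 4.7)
The plan is to proceed by contradiction. Assume $R$ contains no hole; I will produce one by applying Lemma~\ref{adjacentholelemma} to a unit disk whose center is near the middle of $R$. Shrink $R$ by $r$ in each coordinate to obtain the inner rectangle $R'$ of dimensions $(2+2r)\times(1+r)$; any disk of radius $r$ with center in $R'$ sits inside $R$. Hence if $R$ contains no hole, no point of $R'$ is a hole center, i.e., every point of $R'$ is at distance strictly less than $1+r$ from the center of some unit disk in the packing. In particular there is a unit disk $D$ whose center $c_0$ satisfies $\|c_0-p_0\|<1+r$, where $p_0$ denotes the center of $R'$ (which is also the center of $R$).

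The core of the argument is to locate a hole tangent to $D$ whose center lies in $R'$. A hole tangent to $D$ at $c_0+\hat n$ has center $c_0+(1+r)\hat n$, so the tangent-hole positions landing in $R'$ correspond to the set
\[
S = \bigl\{\hat n : c_0+(1+r)\hat n \in R'\bigr\}
\]
of unit vectors $\hat n$. I would then establish the geometric claim that $S$ contains an arc of central angle at least $2\pi/6$ on the unit circle. Lemma~\ref{adjacentholelemma} applied to the corresponding arc of $\partial D$ yields a hole tangent to $D$ with its center in $R'$, hence a hole contained in $R$, contradicting the assumption.

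To prove the geometric claim, translate so that $p_0=0$ and write $c_0=(1+r)\vec w$ with $\|\vec w\|<1$. Then $S$ is the intersection of the unit circle with the $2\times 1$ axis-aligned rectangle
\[
W = [-1-w_1,\,1-w_1]\times[-\tfrac12-w_2,\,\tfrac12-w_2]
\]
centered at $-\vec w$. The claim reduces to the planar statement that, for every $\vec w$ in the closed unit disk, the unit circle meets $W$ in an arc of angular measure at least $\pi/3$. The reflective symmetries $w_1\mapsto-w_1$ and $w_2\mapsto-w_2$ allow me to assume $w_1,w_2\geq 0$; a direct case analysis in the two regimes ``$w_1$ dominant'' and ``$w_2$ dominant'' then exhibits an appropriate arc of width $\pi/3$ inside $W$ (the ``back'' arc centered roughly opposite to $\vec w$, with a small adjustment of the center angle near the $w_2=0$ axis).

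The main obstacle is precisely this planar verification. It is elementary but tight: the minimum, over $\|\vec w\|\leq 1$, of the largest arc of the unit circle contained in $W$ equals exactly $\pi/3$, attained at $\vec w=(\pm 1,0)$ and along the whole segment $w_2=0$. The dimensions of $R$ have been calibrated so that the identity $1+r=2/\sqrt 3$ makes this minimum coincide exactly with the $2\pi/6$ arc guarantee of Lemma~\ref{adjacentholelemma}, leaving no slack in the argument.
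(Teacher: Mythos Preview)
Your approach is essentially the paper's own: pick a unit disk $D$ whose center lies within $1+r$ of the center of $R$, exhibit a $2\pi/6$ arc of $\partial D$ along which every tangent hole sits inside $R$, and invoke Lemma~\ref{adjacentholelemma}. Your rescaling by $1+r$ to reduce the key geometric claim to ``the unit circle meets the $2\times 1$ rectangle $W$ in an arc of length $\ge\pi/3$'' is a clean repackaging of exactly the same verification the paper carries out; the paper's explicit split of the square $S$ into sub-rectangles $S'$ (both arc endpoints on the long sides of $R$) and $S''$ (one endpoint on a short side) is the concrete version of the two-regime case analysis you defer.
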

\begin{proof}
This argument relies on the construction shown in Figure 3, where $R$ is shown with vertices $A$, $B$, $C$ and $D$ and six holes are shown at special positions (centers) $E$, $F$, $G$, $H$, $I$ and $J$ within $R$. The dimensions of $R$ follow from tangency conditions satisfied at these special positions. The center of $R$ is $E$; a unit disk placed there will be tangent to holes at $F$, $G$, $H$ (and also $I$). Tangency of hole $F$ with rectangle side $BD$ determines the rectangle edge $d(A,B)=2+4r$. Holes $G$ and $H$ are also tangent to the unit disk placed at $E$. Requiring these to subtend angle $2\pi/6$ at $E$ and to be tangent to the sides of the rectangle determines the other dimension of $R$: $d(B,D)=d(G,H)+2r=1+3r$.

\begin{figure}[!t]
\begin{center}
\includegraphics[width=4.in]{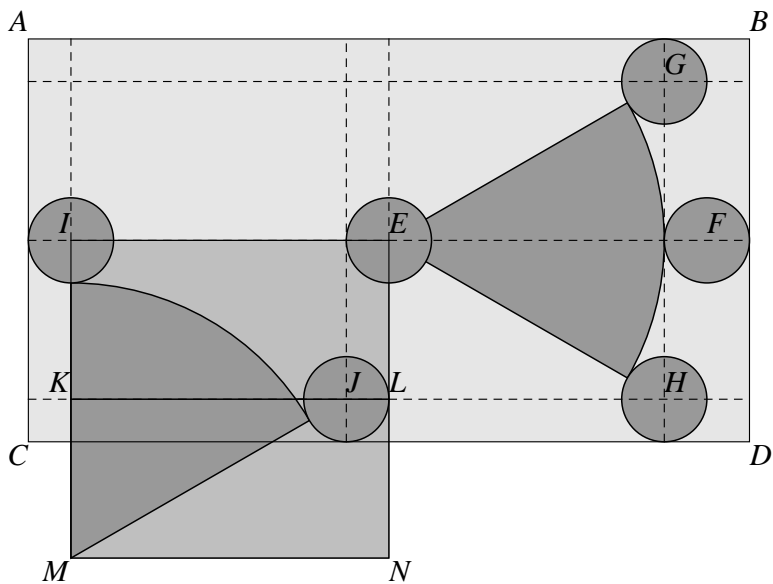}
\end{center}
\caption{Construction diagram used in the proof of lemma \ref{holeinrectanglelemma}.}
\end{figure}

We will also need the two smaller rectangles, $S'$ with vertices $I$, $E$, $K$ and $L$, and $S''$ with vertices $K$, $L$, $M$ and $N$. Tangencies determine $d(I,E)=1+r$ and $d(E,L)=(1+r)/2$. Rectangle $S'$ is congruent to $S''$; their union forms a square $S$ of side $1+r$. It is easily checked that $I$, $J$ and $M$ form the vertices of an equilateral triangle of side $1+r$. A unit disk centered at $M$ would thus be tangent to the holes at $I$ and $J$ and the angle subtended by the holes relative to $M$ will be $2\pi/6$.

Now consider any packing of unit disks and let $d$ be the minimum distance between $E$ and the disk centers. We may assume $d< 1+r$, since otherwise there is a hole at $E$. Without loss of generality we can assume the disk with distance $d$ has its center in the quadrant occupied by square $S$, and for convenience we even allow it to lie anywhere within $S$ (which contains the quarter circle of radius $1+r$). To complete the proof we need to show that a unit disk centered anywhere in $S$ will always admit an arc of its circumference for potential hole tangencies with central angle no less than $2\pi/6$ and such that the holes are always contained in $R$. By lemma \ref{adjacentholelemma} we then know that one of these holes will survive no matter how the other unit disks are packed.

First consider the case where the unit disk center lies in rectangle $S'$. The arc shown in Figure 3 and centered at $E$ is an extreme point in the case we are considering here. This arc is continuously related to arcs at all other centers in $S'$. The significance of $S'$ is that the arc endpoints, for disks centered in $S'$, are always determined by hole tangencies to sides $AB$ and $CD$ of $R$. Since the smallest arc arises when the centers of the two holes lie on a line perpendicular to those sides of $R$ (as for centers $G$ and $H$), the subtended angle will always satisfy the hypothesis of lemma \ref{adjacentholelemma}.

Finally, consider the only other case, where the unit disk center lies in $S''$. An extreme point for this case, with the disk center at $M$, is also shown in Figure 3. The endpoints of the arc of hole-disk tangencies are now determined by tangencies with the rectangle sides $AC$ and $CD$ (as for centers $I$ and $J$). The arc angle is an increasing function of the disk center, both as it moves from $M$ to $K$ and from $M$ to $N$. Since the hypothesis of lemma \ref{adjacentholelemma} was already satisfied at $M$ where the angle is at a minimum, it holds throughout $S''$.

\end{proof}

\begin{lem}\label{holesamplinglemma}
Let $H_d$ be a hexagonal lattice in the plane with minimum distance \mbox{$d<\sqrt{3} r$}, then a hole placed anywhere in the plane will contain at least one point of $H_d$.
\end{lem}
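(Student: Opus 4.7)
The key observation is that this is a covering statement about the hexagonal lattice, dual to the packing condition that defined the hole radius in the first place. My plan is to show that the covering radius of $H_d$ is strictly less than $r$, so that every point of the plane (in particular, the center of any hole) lies within distance $r$ of some lattice point.

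First I would recall that the Voronoi cell of a hexagonal lattice with minimum distance $d$ is a regular hexagon whose vertices are the circumcenters of the equilateral triangles formed by three mutually nearest lattice points. Each such circumcenter sits at distance $d/\sqrt{3}$ from its three defining lattice points, so the covering radius of $H_d$ equals $d/\sqrt{3}$. Hence for any point $x$ in the plane there exists a lattice point $p \in H_d$ with $|x - p| \le d/\sqrt{3}$.

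Next I would use the hypothesis $d < \sqrt{3}\,r$, which gives $d/\sqrt{3} < r$. Let the hole in question be the disk of radius $r$ centered at some point $x$. By the previous step, there exists $p \in H_d$ with $|x-p| \le d/\sqrt{3} < r$, so $p$ lies in the interior of the hole. This is the desired conclusion.

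I do not expect a real obstacle here: the only nontrivial input is the value of the covering radius of the hexagonal lattice, which is elementary planar geometry. The proof amounts to combining this standard fact with the given inequality on $d$. The only care required is to be clear that "minimum distance $d$" refers to the nearest-neighbor spacing, so that the triangles in the Delaunay triangulation are equilateral with side $d$ and the circumradius formula $d/\sqrt{3}$ is valid.
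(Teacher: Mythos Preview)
Your argument is correct and matches the paper's proof essentially line for line: both compute the covering radius of $H_d$ as $d/\sqrt{3}$, use the hypothesis $d<\sqrt{3}\,r$ to bound it below $r$, and conclude that the hole center is within distance $r$ of some lattice point. There is nothing to add.
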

\begin{proof}
The hexagonal lattice $H_d$ with minimal distance $d$ is generated by translations $(d,0)$ and $(d/2, \sqrt{3} d/2)$. The covering radius of $H_d$, defined as the maximum distance between any point of the plane and the nearest point of $H_d$, has value $r_\mathrm{c}=d/\sqrt{3}$. A hole center placed anywhere in the plane will always be within distance $r_\mathrm{c}$ of a point of $H_d$. By choosing $r_\mathrm{c}<r$, or $d<\sqrt{3} r$, we ensure that the hole will contain a point of $H_d$.
\end{proof}

\begin{thm}\label{55thm}
The intersection of the rectangle of lemma \ref{holeinrectanglelemma} with the lattice of lemma \ref{holesamplinglemma}, suitably translated and rotated, gives a 55-point configuration that cannot be covered by non-overlapping unit disks .
\end{thm}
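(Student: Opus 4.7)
The plan is to set $P = R \cap H_d$, where $R$ is the rectangle from Lemma \ref{holeinrectanglelemma} and $H_d$ is the lattice from Lemma \ref{holesamplinglemma} with $d < \sqrt{3}\,r$, and then to choose a rotation and translation of $H_d$ relative to $R$ --- the one depicted in Figure 1 --- so that exactly $55$ points of $H_d$ lie inside $R$.

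The noncoverability of $P$ follows from the two lemmas by a short contradiction. Suppose some packing of unit disks covers every point of $P$. By Lemma \ref{holeinrectanglelemma}, the rectangle $R$ must contain a hole $h$, that is, a disk of radius $r$ disjoint from every unit disk of the packing. Since $h \subset R$ and $d < \sqrt{3}\,r$, Lemma \ref{holesamplinglemma} supplies a lattice point $p \in H_d \cap h$. As $p$ lies in both $R$ and $H_d$, it belongs to $P$; as $p$ lies in $h$, which is disjoint from every unit disk, it is uncovered --- contradicting the assumption and proving the theorem.

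What remains is the purely enumerative verification that the count $|R \cap H_d|$ equals $55$ for the chosen placement. Aligning one lattice direction with the long side of $R$ and taking $d$ just below $\sqrt{3}\,r = 2 - \sqrt{3}$ fixes both the horizontal spacing $d$ and the vertical spacing $d\sqrt{3}/2$ between rows of $H_d$. The remaining freedom is a translation, which is used to push the two nominal rows of $H_d$ nearest the horizontal edges of $R$ just outside the rectangle --- precisely the configuration pointed out in the caption of Figure 1. The number of points in each surviving row is then determined by $(2+4r)/d$, and summing these contributions yields the claimed total of $55$.

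The main obstacle is not logical --- the core argument is essentially a syllogism between Lemmas \ref{holeinrectanglelemma} and \ref{holesamplinglemma} --- but the simultaneous tuning of $d$ and of the lattice offset. One must pick $d$ close enough to $\sqrt{3}\,r$ that the hole-sampling property still holds, while also arranging the offset so that the two grazing rows lie strictly outside $R$ and every other lattice point of interest lies strictly inside $R$; verifying these strict inequalities, rather than computing the count itself, is the delicate step and is what Figure 1 is intended to make self-evident.
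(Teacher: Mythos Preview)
Your proposal is correct and follows essentially the same route as the paper: combine Lemma~\ref{holeinrectanglelemma} and Lemma~\ref{holesamplinglemma} to show that any packing leaves some lattice point in $R$ uncovered, then verify the count of $55$ for the particular placement in Figure~1. The only presentational difference is in the enumerative step: rather than fixing $d$ ``just below'' $\sqrt{3}\,r$ and checking strict inequalities directly, the paper first sets $d=\sqrt{3}\,r$ exactly, checks that no lattice point lands on $\partial R$ (the footnote supplies the numbers $11\sqrt{3}\,r/2\approx 1.4737$ versus $1+3r\approx 1.4641$), and then appeals to continuity to shrink $d$ without changing the intersection count---but this is the same verification you describe, organized slightly differently.
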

\begin{proof}
Let $X$ be the finite point set obtained by intersecting the rectangle $R$ in lemma \ref{holeinrectanglelemma} with the lattice $H_d$ of lemma \ref{holesamplinglemma}, to which we may apply arbitrary translations and rotations. By construction we know that, in any packing of unit disks, $R$ will contain a hole and this hole, as any hole, will contain a point of $H_d$ that is uncovered by unit disks. The relative translation and rotation used in the construction of the set $X$ shown in Figure 1 was chosen to minimize the number of points. To verify that the constraint $d<\sqrt{3} r$ on the minimum distance of $H_d$ can be satisfied we first construct $X$ with $d=\sqrt{3} r$ and check\footnote{This is how Figure 1 was constructed. The vertical separation of the points just outside $R$ is $11\sqrt{3} r/2\approx 1.4737$ while the vertical dimension of $R$ is $1+3r\approx 1.4641$.} that no points of $H_d$ lie on the boundary of $R$. Since this is the case for the 55-point configuration shown, the lattice can be compressed in scale (to satisfy $d<\sqrt{3} r$) without additional points entering $R$.
\end{proof}

\section{Discussion}

That our lower bound on $N$ is poor can be seen from the proof of theorem \ref{10thm}. The argument bounds $N$ by the minimum number of interstitium translates $I(t)$ required to cover the fundamental domain of the hexagonal lattice $U=\mathbb{R}^2/H$. Our proof uses only the area of $I(t)$ and does not exploit the fact that interstitia are rather inefficient covering shapes. Figure 4 shows the thinnest lattice covering, requiring the 25 translates $t\in H/5$. The thinnest covering we have found is a non-lattice set of 23 translations and it appears unlikely that this number can be reduced significantly. The best lower bound, based on the close packing handicap for the second player, is therefore likely to be 23 or slightly smaller, although proving this appears difficult.

\begin{figure}[!t]
\begin{center}
\includegraphics[width=3.in]{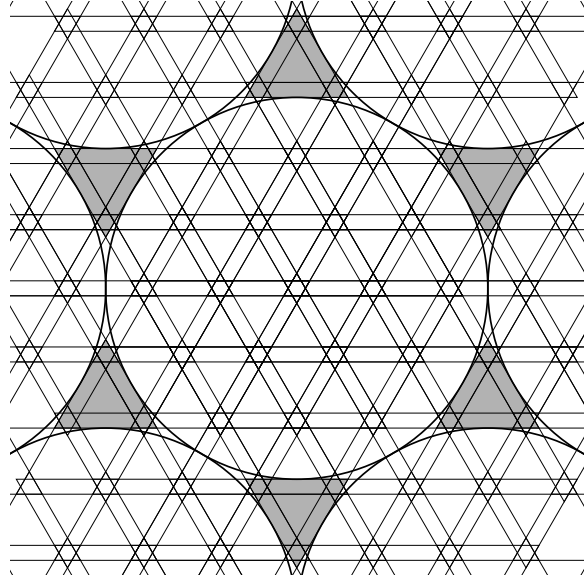}
\end{center}
\caption{A covering of the plane by the triangles (gray) internally tangent to the interstitium is a sufficient condition for the interstitium to cover the plane. This shows the covering obtained with the 25 translations generated by the lattice $H/5$.}
\end{figure}

There is a significant gap between the likely lower bound and our upper bound of 55. Depending on how strong of a handicap the close packing restriction is, the true value of $N$ will be closer to one end of this range or the other. The 55-point configuration appears to be 
non-optimal because not all of the points have the property that, when removed, the remaining points can be covered. We have not attempted to prove non-optimality because it involves the examination of many cases.

The hardness of the packing-constrained point covering problem, or PC$^2$, can be assessed by identifying the complexity class \cite{complexity} of a discrete variant. Finding a large set of binary codewords with a given minimum Hamming distance is a discrete variant of the standard packing problem. This is already quite hard, being an instance of the independent set problem which is known to be NP-complete. For PC$^2$ we propose an analogous variant. Fix the length of binary codewords. Let $P$ be the set of all codebooks of $N$ distinct codewords, and $C$ the set of all codebooks of codewords with Hamming distance greater than $2d$. Problem: what is the minimum $N$ such that there exists a $p\in P$ with the property that not all of its codewords are within Hamming distance $d$ of a codeword in some $c\in C$? The decision version of this problem (given a particular $N$) is equivalent to a generalization of Boolean satisfiability with two sets of variables, where a satisfying assignment for one set (there exists $c\in C$) is required for any truth assignment to the other subset (for any $p\in P$). Problems of this type are not in NP but belong to the superset PSPACE  of even harder problems. This observation suggests that computational efforts at improving the bounds for PC$^2$ may not get very far.

\end{document}